\DeclareMathAlphabet\mathbfcal{LS2}{stixcal}{b}{n}
\numberwithin{equation}{section}
\DeclareFontFamily{OT1}{rsfs}{}
\DeclareFontShape{OT1}{rsfs}{n}{it}{<-> rsfs10}{}
\DeclareMathAlphabet{\mathscr}{OT1}{rsfs}{n}{it}
\theoremstyle{plain}
\newtheorem{theorem}{Theorem}[section]
\newtheorem{lemma}[theorem]{Lemma}
\newtheorem{conjecture}[theorem]{Conjecture}
\newtheorem{question}[theorem]{Question}
\theoremstyle{definition}
\newtheorem{remark}[theorem]{Remark}
\renewcommand\P{\mathbf{P}}
\newcommand\E{\mathbf{E}}
\newcommand\Var{\mathbf{Var}}
\newcommand\n{\mathbf{n}}
\newcommand\C{\mathbb{C}}
\newcommand\eps{\varepsilon}
\begin{document}

\title[Convergence of a series of Erd\H{o}s]{The convergence of an alternating series of Erd\H{o}s, assuming the Hardy--Littlewood prime tuples conjecture}

\author{Terence Tao}
\address{UCLA Department of Mathematics, Los Angeles, CA 90095-1555.}
\email{tao@math.ucla.edu}


\subjclass[2020]{11N05, 40A05}

\begin{abstract}  It is an open question of Erd\H{o}s as to whether the alternating series $\sum_{n=1}^\infty \frac{(-1)^n n}{p_n}$ is (conditionally) convergent, where $p_n$ denotes the $n^{\mathrm{th}}$ prime.  By using a random sifted model of the primes recently introduced by Banks, Ford, and the author, as well as variants of a well known calculation of Gallagher, we show that the answer to this question is affirmative assuming a suitably strong version of the Hardy--Littlewood prime tuples conjecture.
\end{abstract}

\maketitle


\section{Introduction}

Let $p_n$ denote the $n^{\mathrm{th}}$ prime.  This paper concerns the following question of Erd\H{o}s \cite[p. 203, E7]{guy} (see also \cite[p. 96]{finch}, \cite{erdos}, \cite{en}):

\begin{question}\label{main}  Does the series $\sum_{n=1}^\infty \frac{(-1)^n n}{p_n}$  converge?
\end{question}

Numerically\footnote{Tom\'as Oliveira da Silva, private communication.}, the series appears to converge (rather slowly) to about $-0.052161$; we do not expect the exact value of this sum to have any additional notable properties.  As recorded in \cite[p. 203, E7]{guy}, Erd\H{o}s noted that the variant series $\sum_{n=1}^\infty \frac{(-1)^n n \log n}{p_n}$ is divergent (the summands fail to converge to zero). 
  
The difference between consecutive magnitudes $\frac{n}{p_n}, \frac{n+1}{p_{n+1}}$ of this alternating series is largely controlled by the prime gap $p_{n+1}-p_n$, so the answer to Question \ref{main} revolves around how this prime gap correlates with the parity of $n$.  If such a correlation existed, it would create a bias in the parity of the prime counting function\footnote{The variable $p$ is always understood to range over primes in this paper.} $\pi(x) \coloneqq \sum_{p \leq x} 1$.  Indeed, it is an unpublished observation of Said\footnote{\url{https://mathoverflow.net/questions/313999}} that Question \ref{main} is equivalent to the following.

\begin{question}\label{main-2}  Does the series $\sum_{n=2}^\infty \frac{(-1)^{\pi(n)}}{n \log n}$ converge?
\end{question}

For the convenience of the reader, we establish the equivalence of Question \ref{main} and Question \ref{main-2} in Section \ref{impl}.

In this paper we show that the answer to both of these questions is affirmative assuming a suitably strong version of the Hardy--Littlewood prime tuples conjecture \cite{tuples}.  We use a (very slightly strengthened) formulation from the recent paper \cite{kuperberg}:

\begin{conjecture}[Quantitative Hardy--Littlewood prime tuples conjecture]\label{hlc}  \cite[Conjecture 1.3]{kuperberg}
There exist two absolute constants $\eps>0$ and $C>0$ such that for all $x \geq 10$, all $k \leq (\log \log x)^5$, and all tuples ${\mathcal H} = \{h_1,\dots,h_k\} \subset [0, \log^2 x]$ of distinct integers $h_1,\dots,h_k$, one has
\begin{equation}\label{hl}
\left|\sum_{n \leq x} 1_{\mathcal{P}}(n+h_1) \dots 1_{\mathcal{P}}(n+h_k) - {\mathfrak S}({\mathcal H}) \int_2^x \frac{dy}{\log^k y}\right| \leq C x^{1-\eps}.
\end{equation}
\end{conjecture}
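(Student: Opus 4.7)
The statement is not a theorem to be proved but a conjectural hypothesis invoked throughout the paper: the case $k=2$, ${\mathcal H}=\{0,2\}$, shorn of the quantitative error, is already the twin prime conjecture, so no unconditional proof is within reach. My ``plan'' is therefore only to record the standard heuristic motivating \eqref{hl} and to indicate what is being asked for beyond the classical Hardy--Littlewood conjecture.

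The heuristic is the familiar one. Model $1_{\mathcal P}(n)$ by an independent random variable of mean $1/\log n$; this alone predicts the integral $\int_2^x dy/\log^k y$. Correct for the non-independence introduced by small primes: for each prime $p$, the probability that a random residue $n \bmod p$ avoids ${\mathcal H} \bmod p$ is $1 - |{\mathcal H} \bmod p|/p$, rather than the independence prediction $(1-1/p)^k$, and the convergent Euler product of the corrective ratios gives precisely the singular series ${\mathfrak S}({\mathcal H})$. The power-saving factor $x^{1-\eps}$ is the square-root-type cancellation one expects under Riemann-hypothesis-type assumptions (verified only for $k=1$, and even there only with a weaker saving unconditionally).

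The obstacle to converting this heuristic into a proof is threefold: one must obtain (i) an asymptotic rather than a mere upper bound for prime $k$-tuple counts, whereas the Selberg, GPY, and Maynard--Tao sieves deliver only the correct order of magnitude; (ii) a power-saving error term, of the kind that already in the $k=1$ case is tied to zero-free regions for $\zeta$; and (iii) uniformity in $k$ up to $(\log\log x)^5$ and in admissible tuples ${\mathcal H}\subset[0,\log^2 x]$, well beyond the ranges accessible by current sieve technology. Each of these goals is separately open, and \eqref{hl} bundles them together; accordingly, the correct response is to adopt the statement as an axiom for the remainder of the paper, as the author does, rather than to attack it directly.
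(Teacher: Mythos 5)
You are right that this is a conjectural hypothesis, not a theorem: the paper offers no proof of Conjecture \ref{hlc} and simply assumes it (citing Kuperberg, with the range of $k$ enlarged from $(\log\log x)^3$ to $(\log\log x)^5$), so declining to ``prove'' it and instead recording the standard heuristic is exactly the correct response. Your account of the heuristic and of why each ingredient (asymptotics for $k$-tuples, power saving, uniformity in $k$ and ${\mathcal H}$) is separately out of reach is accurate and consistent with the paper's treatment.
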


Here $1_{\mathcal{P}}$ is the indicator function of the primes ${\mathcal P}$, ${\mathfrak S}({\mathcal H})$ is the singular series
$$ {\mathfrak S}({\mathcal H})  \coloneqq \prod_p \frac{1-\nu_{\mathcal H}(p)}{(1-1/p)^k},$$
and $\nu_{\mathcal H}(p)$ denotes the number of distinct residue classes modulo $p$ occupied by ${\mathcal H}$.
It is common to restrict \eqref{hl} to the admissible case ${\mathfrak S}({\mathcal H}) > 0$, but this is not necessary since \eqref{hl} is very easy to establish in the non-admissible case ${\mathfrak S}({\mathcal H}) = 0$.
In \cite{kuperberg}, $k$ was restricted to the slightly smaller range $k \leq (\log\log x)^3$, but we will need to increase the exponent $3$ to $5$ for technical reasons. We remark that Conjecture \ref{hlc} has been verified almost surely (even with this enlarged range of parameters) for a certain random model of the primes; see \cite[Theorem 1.3]{bft}.  A version of this random model will also play a key role in the arguments of this paper.

Our main result is then 

\begin{theorem}\label{main-impl}  Assume Conjecture \ref{hlc}.  Then the answer to Question \ref{main-2} (and hence Question \ref{main}) is affirmative.
\end{theorem}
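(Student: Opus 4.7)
The plan is to use Conjecture~\ref{hlc} to show that $(-1)^{\pi(n)}$, viewed as a $\pm 1$-valued ``random telegraph'' signal that flips at every prime, oscillates enough that the partial sums $T(N) := \sum_{n \leq N} (-1)^{\pi(n)}$ are substantially smaller than $N$. The first step is Abel summation: since
\[ \frac{1}{n \log n} - \frac{1}{(n+1)\log(n+1)} = O\!\left(\frac{1}{n^2 \log n}\right), \]
the convergence of the series in Question~\ref{main-2} follows from any bound $|T(N)| = O(N/(\log\log N)^{1+\delta})$ with $\delta > 0$, because such a bound yields $T(N)/(N \log N) \to 0$ and $\sum_n |T(n)|/(n^2 \log n) < \infty$ (via $u = \log n$, $v = \log u$, which reduces the tail to $\int dv/v^{1+\delta}$).

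\textbf{Smoothing and second moment.} To extract cancellation in $T(N)$, I would introduce a window length $H$ (tuned below) and use the near-translation identity
\[ T(N) = \frac{1}{H} \sum_{m \in [N,2N]} A_H(m) + O(H), \qquad A_H(m) := \sum_{h=0}^{H-1} (-1)^{\pi(m-h)}, \]
followed by Cauchy--Schwarz to get $|T(N)|^2 \lesssim \frac{N}{H^2} \sum_m A_H(m)^2$. Expanding the square and using the parity identity $(-1)^{\pi(m-h_1) + \pi(m-h_2)} = (-1)^{\pi(m-h_1) - \pi(m-h_2)}$ reduces matters to estimating the local correlations
\[ \Phi(d) := \frac{1}{N} \sum_{m \sim N} (-1)^{\pi(m) - \pi(m-d)}, \qquad 0 \leq d \leq H. \]

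\textbf{A Gallagher-type expansion under HL.} To bound $\Phi(d)$ I would run the classical Gallagher expansion: writing $(-1)^K = (1-2)^K = \sum_{k=0}^{K}\binom{K}{k} (-2)^k$ with $K := \pi(m) - \pi(m-d)$, and using Conjecture~\ref{hlc} to control each factorial moment $\frac{1}{N}\sum_{m \sim N}\binom{K}{k}$ for $k \leq (\log\log N)^5$. Summing the truncated Taylor series should identify $\Phi(d)$ with the Poisson value $e^{-2d/\log N}$ up to an acceptable error, whence $\sum_m A_H(m)^2 \lesssim N H \log N$ and therefore $|T(N)| \lesssim N \sqrt{\log N/H}$. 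Choosing $H = \log N \cdot (\log\log N)^{2+2\delta}$ then yields $|T(N)| = O(N/(\log\log N)^{1+\delta})$, enough to close the argument.

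\textbf{Main obstacle; role of the BFT model.} The hardest step will be justifying the Poisson approximation $\Phi(d) \approx e^{-2d/\log N}$ rigorously and uniformly in $d \leq H$. The binomial expansion of $(-1)^K$ has unboundedly many terms, while Conjecture~\ref{hlc} only controls factorial moments up to order $(\log\log N)^5$; one must therefore keep the Poisson parameter $\lambda = H/\log N$ comfortably inside this range (so the truncated Taylor series is essentially exhaustive, and the tail contribution of large $k$ can be absorbed), while still taking $H$ large enough that $\sqrt{\log N/H}$ beats every power of $\log\log N$ relevant to step~1. The enlarged exponent~$5$ (in place of the~$3$ of \cite{kuperberg}) appears designed precisely for this balancing act. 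I would also expect the random sifted model of \cite{bft} to play a substantive role: it is a probabilistic surrogate for the primes in which the analogous moment identities hold exactly and Poisson concentration is transparent, so one can first carry out Gallagher's calculation cleanly in that model (bounding the tail and error terms via genuine independence/concentration) and then transfer each step to the actual primes by invoking Conjecture~\ref{hlc} to match the low-order correlations.
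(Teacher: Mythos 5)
Your outer reduction coincides with the paper's: Abel summation to the partial-sum bound $T(N)\ll N/(\log\log N)^{1+\delta}$, van der Corput differencing with a window $H\asymp \log N\,(\log\log N)^{2+2\delta}$, Cauchy--Schwarz, and a Gallagher/Bonferroni expansion of $(-1)^{K}$ into factorial moments controlled by Conjecture~\ref{hlc}. The gap is in the step you yourself flag as the main obstacle, and the resolution you sketch is precisely the route that does not work. After applying Conjecture~\ref{hlc}, one is left with the alternating sum $\sum_{k=0}^r \frac{(-2)^k}{\log^k N}\sum_{0<h_1<\dots<h_k\le \lambda\log N}{\mathfrak S}(\mathcal{H})$ with $\lambda\asymp(\log\log N)^{4.4}$ and $r\asymp(\log\log N)^{4.5}$. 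To ``identify $\Phi(d)$ with the Poisson value $e^{-2d/\log N}$'' by summing the truncated series term by term, you would need an asymptotic for the singular-series average $\sum_{\mathcal H}{\mathfrak S}(\mathcal H)$ with an error term that survives multiplication by $2^k$ and summation over $k$ up to $r$. The best known such asymptotics \cite{kuperberg} only have usable error terms for $k$ up to roughly $(\log\log N)^{0.5}$, far short of the range required; the paper notes that pushing this approach yields only an exponent below $1/4$ in place of $1.1$, which is not enough to make $\sum 1/(n\log n(\log\log n)^{c})$ converge. So the termwise Poisson identification, as stated, fails.

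Your fallback --- carry out the computation in the Banks--Ford--Tao sifted model and transfer via Conjecture~\ref{hlc} --- is the correct idea, but the proposal does not contain the mechanism that makes it work, and it is not a moment computation in the model either. The actual argument: (a) one has the \emph{exact} identity $\P(h_1,\dots,h_k\in\mathbfcal{S}_z)={\mathfrak S}(\mathcal H)/\log^k N+O(N^{-\eps})$ for each tuple, with no moment asymptotics needed, which lets one replace the singular-series sums by $\sum_k(-2)^k\E\binom{\mathbf{S}_z}{k}$; (b) one then \emph{resums} the Bonferroni expansion inside the model to recover $\E(-1)^{\mathbf{S}_z}$ plus a tail $O(2^r\E\binom{\mathbf{S}_z}{r})$ (the tail is the one place a singular-series moment bound at $k=r$ is genuinely needed, via \cite[Theorem 1.2]{kuperberg}); (c) the bias $\E(-1)^{\mathbf{S}_w}$ is controlled not by Poisson concentration but by a recursive contraction: sifting by each new prime $p$ flips the parity with conditional probability $\mathbf{S}/p$, giving $|\E(-1)^{\mathbf{S}_{p_{n+1}}}|\le(1-2\E\mathbf{S}_{p_n}/p_{n+1})|\E(-1)^{\mathbf{S}_{p_n}}|+O(\E|\mathbf{S}_{p_n}-\E\mathbf{S}_{p_n}|/p_{n+1})$, which is iterated down from $w=\lambda\log N$ to $w=z$; and (d) the error term in (c) requires a variance bound on $\mathbf{S}_w$, which rests on the classical estimate $2\sum_{h_1<h_2\le H}{\mathfrak S}(\{h_1,h_2\})\le H^2$. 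This cancellation-across-$k$ structure (and the resulting weaker but sufficient bound $O(1/\sqrt{\lambda})$ in place of $e^{-2\lambda}$) is the substance of the proof, and it is absent from the proposal.
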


We prove this theorem in Section \ref{impl-2} below.  After an application of the van der Corput $A$-process, the problem reduces to showing that the parity of the number of primes in intervals of the form $(x, x+\lambda \log x]$ are fairly well equidistributed, for moderately large $\lambda$; a typical choice of parameters is $\lambda \asymp (\log\log x)^{4.4}$.  Using Conjecture \ref{hlc}, this then reduces to controlling a certain alternating sum involving the singular series ${\mathfrak S}({\mathcal H})$ over various $k$-tuples ${\mathcal H} = \{h_1,\dots,h_k\}$.  It is tempting to then control the contribution of each $k$ separately and treat the errors by the triangle inequality, in the spirit of a well known calculation of Gallagher \cite{gallagher} which predicts Poissonian statistics for the number of primes in $(x, x+\lambda \log x]$ (which implies in particular that the mean value of $(-1)^{\pi(x+\lambda \log x)-\pi(x)}$ converges to $e^{-2\lambda}$ in the limit $x \to \infty$ for $\lambda$ fixed).   Unfortunately, the range of $k$ needed is a little bit too large to use the strongest available estimates (due to V. Kuperberg \cite{kuperberg}) for mean values of the singular series in this fashion; we need $k$ to be as large as $(\log\log x)^{4.5}$ or so, but quantitative estimates only currently have good error terms in the regime $k \leq (\log\log x)^{0.5}$. However, we can proceed instead by expressing these sums (up to acceptable errors) using the random sieve model $\mathbfcal{S}_z$ introduced in \cite{bft}, effectively replacing the primes by this model.  One then performs some probabilistic calculations on that model to ensure the required equidistribution, taking advantage of cancellations between the contributions of different values of $k$.  The key point is that many of the sifting steps used to build the random sieve model act to reduce any irregularities of distribution in the parity of the number of sifted elements, rather than increase it.

We emphasize that the random sieve model is not used merely to give heuristic support to various statements about the primes, but to actually prove rigorous (albeit conditional) results about the primes themselves.  With this perspective, one can interpret the prime tuples conjecture as an assertion that various statistics concerning the primes are very well approximated by the corresponding statistics of the random sieve model, so that calculations involving the latter can be used to also control the former.

We use the usual asymptotic notation $X = O(Y)$, $X \ll Y$, or $Y \gg X$ to denote the estimate $|X| \leq CY$ for an absolute constant $C$, and $X \asymp Y$ to denote the estimate $X \ll Y \ll X$.

The author is supported by NSF grant DMS-1764034 and by a Simons Investigator Award.  We thank William Banks, Kevin Ford, Dan Goldston, Ofir Gorodetsky, and Kannan Soundararajan for suppying references, and to William Banks for suggesting some additional questions and generalizations to the argument.  We also thank Binbin Hu for a correction.

\section{Equivalence of Question \ref{main} and Question \ref{main-2}}\label{impl}

We now show the equivalence of Question \ref{main} and Question \ref{main-2}.  We will in fact show the more precise relation
\begin{equation}\label{nx}
 \sum_{n \leq x} \frac{(-1)^n n}{p_n} = \frac{1}{2} \sum_{2 \leq m \leq x \log x} \frac{(-1)^{\pi(m)}}{m \log m} + C + o(1)
\end{equation}
for some absolute constant $C$, where $o(1)$ denotes a quantity that goes to zero as $x \to \infty$, from which the equivalence of the two questions clearly follows.

Firstly, by shifting the summation variable $n$ by $1$, we have
$$ \sum_{n \leq x} \frac{(-1)^n n}{p_n} = -\frac{1}{2} + \sum_{n \leq x} \frac{(-1)^{n+1} (n+1)}{p_{n+1}} + o(1).$$
Averaging the left and right-hand sides, we conclude that
$$ \sum_{n \leq x} \frac{(-1)^n n}{p_n} = -\frac{1}{4} + \frac{1}{2} \sum_{n \leq x} \left( \frac{(-1)^n n}{p_n} + \frac{(-1)^{n+1} (n+1)}{p_{n+1}}\right)  + o(1).$$
One could also obtain this identity by writing $(-1)^n = \frac{1}{2} ((-1)^n - (-1)^{n-1})$ and then summing by parts.  We may rewrite the summand on the right-hand side using the identity
\begin{equation}\label{ident}
 \frac{(-1)^n n}{p_n} + \frac{(-1)^{n+1} (n+1)}{p_{n+1}} = \frac{(-1)^n n (p_{n+1}-p_n)}{p_n p_{n+1}} - \frac{(-1)^n}{p_{n+1}}.
\end{equation}
By the alternating series test we have
$$ \sum_{n \leq x} \frac{(-1)^n}{p_{n+1}} = C_0 + o(1)$$
for some absolute constant $C_0$.  Also, from the prime number theorem and subdivision of the $m$ variable we have
$$ \sum_{2 \leq m \leq x \log x} \frac{(-1)^{\pi(m)}}{m \log m} = \sum_{n \leq x} \sum_{p_n \leq m < p_{n+1}} \frac{(-1)^{\pi(m)}}{m \log m} + o(1).$$
Thus it will suffice to show that the series
$$ \sum_{n=1}^\infty \left( \sum_{p_n \leq m < p_{n+1}} \frac{(-1)^{\pi(m)}}{m \log m} - \frac{(-1)^n n (p_{n+1}-p_n)}{p_n p_{n+1}} \right)$$
is absolutely convergent.  Since $(-1)^{\pi(m)} = (-1)^n$ for $p_n \leq m < p_{n+1}$, this is equivalent to showing that\footnote{We choose the initial index $n=10$ of this sum rather arbitrarily; any index for which $\log\log n$ is well-defined and positive would suffice here.}
\begin{equation}\label{nsum}
 \sum_{n=10}^\infty \left| \sum_{p_n \leq m < p_{n+1}} \frac{1}{m \log m} - \frac{n (p_{n+1}-p_n)}{p_n p_{n+1}}  \right| < \infty.
\end{equation}
By the intermediate value theorem we have
$$ \sum_{p_n \leq m < p_{n+1}} \frac{1}{m \log m}  = \frac{p_{n+1} - p_n}{x_n \log x_n}$$
for some $p_n \leq x_n < p_{n+1}$.  From the prime number theorem we have $p_n, p_{n+1} = n \log n \left(1 + O(\frac{\log \log n}{\log n}) \right)$ for $n \geq 10$, and thus after some calculation
$$ \frac{1}{x_n \log x_n} = \frac{n}{p_n p_{n+1}} + O\left( \frac{\log\log n}{n \log^3 n} \right)$$
for $n \geq 10$. On the other hand, from summation by parts and the prime number theorem we have
$$ \sum_{n=10}^\infty \frac{\log\log n}{n \log^3 n} (p_{n+1}-p_n) \ll 1 + \sum_{n=10}^\infty \frac{\log \log n}{n^2 \log^3 n} p_n < \infty,$$
and the claim follows.

\begin{remark}  A more careful analysis of the above argument shows that the $o(1)$ error in \eqref{nx} can be taken to be $O( \frac{\log\log x}{\log x})$ for $x \geq 10$.
\end{remark}

\section{Proof of main theorem}\label{impl-2}

We now establish Theorem \ref{main-impl}.  By summation by parts, it will suffice to establish the bound
\begin{equation}\label{pin-decay}
\sum_{n \leq x} (-1)^{\pi(n)} \ll \frac{x}{(\log\log x)^{1.1}}
\end{equation}
for all sufficiently large $x$, basically because the series $\sum_{n=10}^\infty \frac{1}{n \log n (\log\log n)^{1.1}}$ is (barely) convergent.  It is convenient to work with slightly shorter intervals than $[1,x]$.  By subdivision, it will suffice to show that
$$
\sum_{x \leq n < x + x^{1-\eps/2}} (-1)^{\pi(n)} \ll \frac{x^{1-\eps/2}}{(\log\log x)^{1.1}}$$
for all sufficiently large $x$, where $\eps$ is the constant in Conjecture \ref{hlc} (which we can assume to be small).  

Fix a sufficiently large $x$.  We express the above estimate probabilistically.  Let $\n$ be\footnote{We will denote random variables in boldface.} an integer drawn uniformly at random from the interval $[x, x + x^{1-\eps/2}]$, then the above estimate is equivalent to the assertion that
$$ \E (-1)^{\pi(\n)} \ll \frac{1}{(\log\log x)^{1.1}},$$
where $\E$ denotes expectation.

The next step is to apply the van der Corput $A$-process (or Weyl differencing), so that we work with the parity of primes in short intervals rather than long intervals.  We introduce the length scale 
\begin{equation}\label{H-def}
H \coloneqq \lfloor (\log\log x)^{4.4} \log x \rfloor,
\end{equation}
then for any $1 \leq h \leq H$ we have the approximate shift invariance
$$ \E (-1)^{\pi(\n)} = \E (-1)^{\pi(\n+h)} + O\left( \frac{1}{(\log\log x)^{10}} \right)$$
(say), hence on averaging in $h$
$$ \E (-1)^{\pi(\n)} = \E \frac{1}{H} \sum_{h=1}^H (-1)^{\pi(\n+h)} + O\left( \frac{1}{(\log\log x)^{10}} \right).$$
 Applying Cauchy--Schwarz, it thus suffices to show that
$$\E\left |\frac{1}{H} \sum_{h=1}^H (-1)^{\pi(\n+h)}\right|^2 \ll \frac{1}{(\log\log x)^{2.2}} $$
which on expanding the square and using the triangle inequality and approximate shift invariance would follow from
$$ \sum_{0 \leq h \leq H} |\E (-1)^{\pi(\n+h)-\pi(\n)}| \ll \frac{H}{(\log\log x)^{2.2}}.$$
To show this, it suffices from the choice \eqref{H-def} of $H$ to establish the estimate
\begin{equation}\label{pine}
\E (-1)^{\pi(\n+\lambda \log x)-\pi(\n)} \ll \frac{1}{\sqrt{\lambda}}
\end{equation}
for any $1 \ll \lambda \ll (\log \log x)^{4.4}$ (using the trivial bound $|\E (-1)^{\pi(\n+h)-\pi(\n)}| \leq 1$ to treat the cases where $h \leq \log x$).   The $\frac{1}{\sqrt{\lambda}}$ bound here is probably not optimal (heuristically this expression should be more like $e^{-2\lambda}$, as predicted both by the Cram\'er random model and the calculations of Gallagher \cite{gallagher}), but is what arises from our (slightly crude) methods, and suffices for our purposes (at least with the strength of Hardy--Littlewood conjecture assumed in Conjecture \ref{hlc}).  Informally, the estimate \eqref{pine} asserts that the parity of the prime counting function in an interval $(\n, \n+\lambda \log x]$ is approximately equidistributed for $\lambda$ moderately large (but not too large).

Fix $\lambda$ in the indicated range.  By rounding if necessary, we may take $\lambda \log x$ to be an integer. We now invoke a Bonferroni type inequality (in the spirit of the Brun pure sieve) to provide tractable upper and lower bounds on the random variable $(-1)^{\pi(\n+\lambda \log x)-\pi(\n)}$:

\begin{lemma}[Bonferroni type inequality]\label{bon}  For any nonnegative integers $N, r$, we have
$$ (-1)^N \leq \sum_{k=0}^r (-2)^k \binom{N}{k} $$
when $r$ is even and
$$ (-1)^N \geq \sum_{k=0}^r (-2)^k \binom{N}{k} $$
when $r$ is odd, using the convention that $\binom{N}{k}$ vanishes for $k>N$.
\end{lemma}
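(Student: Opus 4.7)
My plan is to prove both inequalities by induction on $N$ (simultaneously for all $r$). Write $T_r(N) := \sum_{k=0}^{r} (-2)^k \binom{N}{k}$ and $E_r(N) := (-1)^N - T_r(N)$, so the goal becomes to show that $E_r(N) \leq 0$ when $r$ is even and $E_r(N) \geq 0$ when $r$ is odd.

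The main ingredient is the recursion
$$E_r(N) = E_r(N-1) - 2 E_{r-1}(N-1) \qquad (r \geq 1),$$
which I would derive by applying Pascal's identity $\binom{N}{k} = \binom{N-1}{k} + \binom{N-1}{k-1}$ to the sum defining $T_r(N)$ and reindexing to obtain $T_r(N) = T_r(N-1) - 2 T_{r-1}(N-1)$, and then substituting $(-1)^N = -(-1)^{N-1}$ so that the explicit $(-1)^{N-1}$-contributions cancel.

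Given the recursion, the induction is essentially mechanical. The base case $N = 0$ gives $T_r(0) = 1 = (-1)^0$ for every $r$, so $E_r(0) = 0$. In the inductive step I would handle $r = 0$ separately by the trivial bound $(-1)^N \leq 1 = T_0(N)$ (the recursion does not literally cover this case, as it would require an undefined $E_{-1}$). For $r \geq 1$, if $r$ is even then the inductive hypothesis at $N-1$ yields $E_r(N-1) \leq 0$ and $E_{r-1}(N-1) \geq 0$ (the latter because $r-1$ is odd), so both terms in $E_r(N) = E_r(N-1) - 2E_{r-1}(N-1)$ are $\leq 0$; the case $r$ odd is symmetric.

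There is no real obstacle here: this is a routine Bonferroni-style calculation. The one observation that makes everything run without friction is the recursion itself, in which the factor $-2$ is precisely what swaps the required sign conditions between consecutive parities of $r$, so that the even and odd cases of the induction on $N$ reinforce one another hand-in-hand.
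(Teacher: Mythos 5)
Your proof is correct, but it takes a genuinely different route from the paper's. The paper fixes $N$ and studies the partial sums $f(r) = \sum_{k=0}^r (-2)^k \binom{N}{k}$ as a function of $r$: a direct computation with the ratio $\binom{N}{r+1}/\binom{N}{r+2}$ shows that the even-indexed subsequence $f(0), f(2), f(4), \dots$ first increases (for $r \lesssim 2N/3$) and then decreases, and since it starts at $f(0)=1$ and stabilizes at $(-1)^N$ for $r \geq N$ (binomial theorem), its minimum is attained at an endpoint, giving $f(r) \geq (-1)^N$ for all even $r$. You instead induct on $N$, using Pascal's identity to derive $T_r(N) = T_r(N-1) - 2T_{r-1}(N-1)$ and hence $E_r(N) = E_r(N-1) - 2E_{r-1}(N-1)$; I checked this recursion and it is right, and the sign bookkeeping (the factor $-2$ flipping the parity of $r$, the separate treatment of $r=0$, the base case $E_r(0)=0$) all goes through. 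Your argument is somewhat more mechanical and avoids the unimodality computation entirely, at the cost of having to carry both inequalities simultaneously through the induction; the paper's argument is shorter to state and additionally reveals where the partial sums peak (near $r = 2N/3$), though neither piece of extra information is needed downstream. Either proof is perfectly adequate for the application.
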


\begin{proof}  We just prove the first inequality, as the second is similar.  Let us write $f(r) \coloneqq \sum_{k=0}^r \binom{n}{k} (-2)^k$.  For $r$ even, routine calculation shows that $f(r+2) \geq f(r)$ when $r \leq 2N/3$ and $f(r+2) \leq f(r)$ when $r \geq 2N/3$.  Also, $f(0)=1$ and $f(r) = (-1)^N$ for $r \geq N$, thanks to the binomial theorem.  From this we conclude that $f(r) \geq (-1)^N$ for all even $r$, as desired.
\end{proof}

In view of this lemma and linearity of expectation, it suffices to show that
\begin{equation}\label{kr}
 \sum_{k=0}^r (-2)^k \E \binom{\pi(\n+\lambda \log x)-\pi(\n)}{k} \ll \frac{1}{\sqrt{\lambda}}
\end{equation}
whenever $r = (\log\log x)^{4.5}+O(1)$ (the significance of this choice being that $r$ is slightly larger than $\lambda$, while still within the range for which Conjecture \ref{hlc} can be applied).  It will be necessary to exploit the oscillation in $k$, and so we will largely refrain from using the triangle inequality in $k$ when proving \eqref{kr}, except regarding errors that enjoy a power savings in $x$, which will always be negligible in practice.

Fix $r$ in the indicated range.  Observe from the combinatorial interpretation of the binomial coefficient that
$$\binom{\pi(\n+\lambda \log x)-\pi(\n)}{k}  = \sum_{0 < h_1 < \dots < h_k \leq \lambda \log x} 1_{\n+h_1,\dots,\n+h_k \in {\mathcal P}}$$
and thus by linearity of expectation
$$\E \binom{\pi(\n+\lambda \log x)-\pi(\n)}{k}  = \sum_{0 < h_1 < \dots < h_k \leq \lambda \log x} \P( \n+h_1,\dots,\n+h_k \in {\mathcal P} ).$$
For $k \leq r \ll (\log\log x)^{4.5}$, we see from Conjecture \ref{hlc} and routine manipulations that
$$ \P( \n+h_1,\dots,\n+h_k \in {\mathcal P} ) = \frac{{\mathfrak S}({\mathcal H})}{\log^k x} + O( x^{-\eps/2} )$$
and thus the left-hand side of \eqref{kr} can be written as
$$ \sum_{k=0}^r \frac{(-2)^k}{\log^k x} \sum_{0 < h_1 < \dots < h_k \leq \lambda \log x} {\mathfrak S}({\mathcal H}) 
+ O\left( \sum_{k=0}^r \frac{(\lambda \log x)^k}{k!} 2^k x^{-\eps/2} \right)$$
where ${\mathcal H} \coloneqq \{h_1,\dots,h_k\}$.
The error term can be crudely bounded by $O( r (2 \lambda \log x)^r x^{-\eps/2} ) = O(x^{-\eps/4})$, which is acceptable.  Thus it will suffice to show that 
\begin{equation}\label{blip}
 \sum_{k=0}^r \frac{(-2)^k}{\log^k x} \sum_{0 < h_1 < \dots < h_k \leq \lambda \log x} {\mathfrak S}({\mathcal H}) 
\ll \frac{1}{\sqrt{\lambda}}.
\end{equation}
It is tempting to use Gallagher type estimates \cite{gallagher} on the mean value of singular series to control the left-hand side (which would morally obtain the sharper bound $O(e^{-2\lambda})$), but unfortunately the best known bounds, see \cite[Theorem 1.1]{kuperberg}, are only available for $k$ a little bit smaller than $(\log\log x)^{0.5}$, which is not quite good enough\footnote{More specifically, we were only able to use the estimates in \cite{kuperberg} to prove a weaker version of \eqref{pin-decay} in which the exponent $1.1$ is replaced by any exponent less than $1/4$.} in our situation.  To get around this, we will reformulate the above estimate using a version of the random sieve model from \cite{bft}, allowing one to exploit cancellations between the different values of $k$.  As in \cite[\S 1.3]{bft} (see also \cite{polya}), we define the sieve cutoff $z$ to be the largest prime number for which
$$ \prod_{p \leq z} (1-\frac{1}{p}) \leq \frac{1}{\log x},$$
then from Mertens' theorem we have $z \asymp x^{1/e^\gamma}$ and
\begin{equation}\label{pz}
 \prod_{p \leq z} (1-\frac{1}{p}) =\frac{1}{\log x} + O( x^{-1/e^\gamma})
\end{equation}
where $\gamma$ is the Euler--Mascheroni constant.  We select a residue class $\mathbf{a}_p \pmod{p}$ for each prime $p \leq z$ uniformly at random, independently in $p$, and then define the random sifted sets $\mathbfcal{S}_w$ for every $w \leq z$ by the formula
$$ \mathbfcal{S}_w \coloneqq \{ 0 < h \leq \lambda \log x: h \neq \mathbf{a}_p \pmod{p} \text{ for all } p \leq w \}.$$
As in \cite{bft}, one should think of $\mathbfcal{S}_z$ as a simplified model for the random set $\{ 0 < h \leq \lambda \log x: \n + h \in \mathcal{P} \}$.

For any tuple $0 < h_1 < \dots < h_k \leq \lambda \log x$ and any $\lambda \log x \leq w \leq z$, we have from construction that
\begin{equation}\label{direct}
\begin{split}
 \P( h_1,\dots,h_k \in \mathbfcal{S}_w ) &= \prod_{p \leq w} \P( h_1, \dots, h_k \neq \mathbf{a}_p \pmod{p} ) \\
&= \prod_{p \leq w} \left(1 - \frac{\nu_p(\{h_1,\dots,h_k\})}{p}\right) \\
&= {\mathfrak S}(\{h_1,\dots,h_k\}) \left( \prod_{p \leq w} \left(1-\frac{1}{p}\right)^k \right)
\prod_{p > w} \frac{\left(1-\frac{1}{p}\right)^k}{1-\frac{k}{p}}  
\end{split}
\end{equation}
since $\nu_p(\{h_1,\dots,h_k\}) = k$ for $p > w$.  If we restrict to the regime $k \leq r$, then we can write 
$$1-\frac{k}{p} = \exp\left(-\frac{k}{p} + O\left( \frac{k^2}{p^2} \right)\right)$$
and
$$1-\frac{1}{p} = \exp\left(-\frac{1}{p} + O\left( \frac{1}{p^2} \right)\right)$$
for $p>w$, and hence
$$ \prod_{p > w} \frac{\left(1-\frac{1}{p}\right)^k}{1-\frac{k}{p}} = \exp\left( O\left( \frac{k^2}{w} \right)\right)$$
and thus
\begin{equation}\label{direct-w}
 \P( h_1,\dots,h_k \in \mathbfcal{S}_w ) = {\mathfrak S}(\{h_1,\dots,h_k\}) \left( \prod_{p \leq w} \left(1-\frac{1}{p}\right)^k \right) \left( 1 + O\left( \frac{k^2}{w} \right)\right).
\end{equation}
In particular, from \eqref{pz} we have
\begin{equation}\label{hhh}
 \P( h_1,\dots,h_k \in \mathbfcal{S}_z ) = \frac{{\mathfrak S}(\{h_1,\dots,h_k\})}{\log^k x} + O( x^{-\eps} )
\end{equation}
for any $0 \leq k \leq r$ (shrinking $\eps$ if necessary).  We can then rewrite \eqref{blip} as
$$ \sum_{k=0}^r (-2)^k \sum_{0 < h_1 < \dots < h_k \leq \lambda \log x} \P( h_1,\dots,h_k \in \mathbfcal{S}_z ) 
\ll \frac{1}{\sqrt{\lambda}},$$
since the contribution of the $O( x^{-\eps} )$ error is negligible as before.  We can simplify the left-hand side to
$$\sum_{k=0}^r (-2)^k  \E \binom{\mathbf{S}_z}{k},$$
where $\mathbf{S}_w \coloneqq |\mathbfcal{S}_w|$ denotes the size of the sifted set at level $w$.  From Lemma \ref{bon} we have
$$\sum_{k=0}^r (-2)^k \binom{\mathbf{S}_z}{k} = (-1)^{S_z} + O\left( 2^r \binom{\mathbf{S}_z}{r} \right),$$
so it will suffice to establish the bounds
\begin{equation}\label{est-1}
\E (-1)^{\mathbf{S}_z} \ll \frac{1}{\sqrt{\lambda}}
\end{equation}
and
\begin{equation}\label{est-2}
2^r \E \binom{\mathbf{S}_z}{r} \ll \frac{1}{\sqrt{\lambda}}.
\end{equation}
We begin with \eqref{est-2}.  We can expand the left-hand side as
$$ 2^r \sum_{0 < h_1 < \dots < h_r \leq \lambda \log x} \P( h_1,\dots,h_r \in \mathbfcal{S}_z ).$$
Using \eqref{hhh} and discarding the error term using crude estimates, it suffices to show that
$$ \frac{2^r}{\log^r x} \sum_{0 < h_1 < \dots < h_r \leq \lambda \log x} {\mathfrak S}({\mathcal H}) \ll \frac{1}{\sqrt{\lambda}}.$$
Using \cite[Theorem 1.2]{kuperberg}, we may bound the left-hand side by
$$ \ll \frac{2^r \lambda^r}{r!} (3 \log r)^r$$
which by Stirling's formula can be estimated by
$$ \ll \left(\frac{O(\lambda \log r)}{r}\right)^r.$$
Since $\lambda \leq (\log\log x)^{4.4}$ and $r = (\log\log x)^{4.5}+O(1)$, one easily verifies that this quantity is $O(1/\sqrt{\lambda})$ as required.

It remains to establish \eqref{est-1}, which is an analogue of \eqref{pine} for the random sieve model $\mathbfcal{S}_z$ of the primes.  We first control the first two moments of $\mathbf{S}_w$:

\begin{lemma}[Mean and variance]\label{var}  For $\lambda \log x \leq w \leq z$, we have
\begin{equation}\label{esw}
\E \mathbf{S}_w = \lambda \log x \prod_{p \leq w} (1-\frac{1}{p}) = \frac{\lambda \log x}{e^\gamma \log w} \left(1 + O\left(\frac{1}{\log w}\right)\right)
\end{equation}
and
\begin{equation}\label{vsw}
 \Var( \mathbf{S}_w ) \ll \frac{\lambda \log x}{\log w}.
\end{equation}
\end{lemma}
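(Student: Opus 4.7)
The mean is essentially a one-line computation. Since $\mathbf{S}_w=\sum_{h=1}^{\lambda\log x} X_h$ with $X_h\coloneqq 1_{h\in\mathbfcal{S}_w}$, the independence of the residues $\mathbf{a}_p$ across $p$ and the uniform distribution on $\Z/p\Z$ give $\E X_h = \prod_{p\leq w}\P(h\neq \mathbf{a}_p\bmod p)=\prod_{p\leq w}(1-1/p)$ for every $h$, and summation over $h$ yields the first equality in \eqref{esw}. The second equality is immediate from Mertens' theorem.

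For the variance, my plan is to square out and exploit \eqref{direct-w} with $k=2$. Writing $H\coloneqq\lambda\log x$, one has
$$\E \mathbf{S}_w^2 = \E \mathbf{S}_w + \sum_{h_1\neq h_2}\P(h_1,h_2\in\mathbfcal{S}_w),$$
while $(\E\mathbf{S}_w)^2 = H^2\prod_{p\leq w}(1-1/p)^2$. The diagonal term is $O(H/\log w)$ by \eqref{esw}. For the off-diagonal term, \eqref{direct-w} gives
$$\P(h_1,h_2\in\mathbfcal{S}_w) = {\mathfrak S}(\{h_1,h_2\})\prod_{p\leq w}(1-1/p)^2\bigl(1+O(1/w)\bigr),$$
so the main term after subtracting $(\E\mathbf{S}_w)^2$ reduces to the pair singular-series sum $\sum_{h_1\neq h_2\leq H}\bigl({\mathfrak S}(\{h_1,h_2\})-1\bigr)$, plus a relative error $O(H^2/w)\prod_{p\leq w}(1-1/p)^2 \ll H^2/(w\log^2 w)$.

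The key analytic input I would cite is Gallagher's classical pair formula (cf.\ \cite{gallagher}, or the sharper \cite[Theorem 1.1]{kuperberg}):
$$\sum_{\substack{h_1,h_2\leq H\\ h_1\neq h_2}}{\mathfrak S}(\{h_1,h_2\}) = H^2 + O(H\log H),$$
so $\sum_{h_1\neq h_2}({\mathfrak S}-1) = O(H\log H)$. Combined with $\prod_{p\leq w}(1-1/p)^2\asymp 1/\log^2 w$, the off-diagonal contribution to the variance is $O\bigl(H\log H/\log^2 w + H^2/(w\log^2 w)\bigr)$.

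Finally, I would invoke the hypothesis $w\geq \lambda\log x = H$ to close the estimate: it gives both $\log H\leq \log w$ and $H/w\leq 1$, so each error term above is dominated by $H/\log w$. Combined with the diagonal contribution, this yields \eqref{vsw}. The main obstacle is simply having the Gallagher pair formula with a power-saving or $\log$-saving error — everything else is bookkeeping, and the range restriction $w\geq \lambda\log x$ is calibrated precisely so that the error terms survive division by $\log^2 w$.
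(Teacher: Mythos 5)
Your proof is correct and follows essentially the same route as the paper: linearity of expectation for the mean, and the second factorial moment combined with \eqref{direct-w} and an average of the pair singular series for the variance. The only (immaterial) difference is at the last step: you use the two-sided asymptotic $\sum_{h_1\neq h_2\leq H}{\mathfrak S}(\{h_1,h_2\})=H^2+O(H\log H)$ (which is Montgomery's \cite[Lemma 2]{mont} rather than Gallagher's) and absorb the $O(H\log H/\log^2 w)$ error via $w\geq H$, whereas the paper invokes the one-sided refinement $2\sum_{h_1<h_2\leq H}{\mathfrak S}(\{h_1,h_2\})\leq H^2$ so that the off-diagonal term is dominated outright by $(\E\mathbf{S}_w)^2$; both inputs are classical and both close the estimate.
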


\begin{proof}  From linearity of expectation we have
$$ \E \mathbf{S}_w = \sum_{0 < h \leq \lambda \log x} \P( h \in \mathbfcal{S}_w ) $$
and similarly
$$\E(\mathbf{S}^2_w - \mathbf{S}_w) = 2 \E \binom{\mathbf{S}_w}{2} = 2 \sum_{0 < h_1 < h_2 \leq \lambda \log x} \P( h_1, h_2 \in \mathbfcal{S}_w ).$$
From \eqref{direct}, \eqref{direct-w} and Mertens' theorem we have
$$ \P(h \in \mathbfcal{S}_w) = \prod_{p \leq w} (1-\frac{1}{p}) = \frac{1}{e^\gamma \log w} \left(1 + O\left(\frac{1}{\log w}\right)	 \right)$$
for all $0 < h \leq w$, which gives \eqref{esw}.  To establish \eqref{vsw}, we see from \eqref{direct-w} that
$$\P( h_1, h_2 \in \mathbfcal{S}_w ) = {\mathfrak S}(\{h_1,h_2\}) \prod_{p \leq w} (1-\frac{1}{p})^2 \left(1 + O\left(\frac{1}{w}\right)\right)
$$
for $0 < h_1 < h_2 \leq \lambda \log x \leq w$.  On the other hand, the bound 
\begin{equation}\label{2h}
 2 \sum_{0 < h_1 < h_2 \leq H} {\mathfrak S}(\{h_1,h_2\}) \leq H^2
\end{equation}
is known for $H$ large enough.  In fact there is a long history\footnote{We are indebted to Ofir Gorodetsky and Dan Goldston for these references.} of results of this type:
\begin{itemize}
\item[(i)] The left hand side of \eqref{2h} was shown to be $H^2 + O(H\log H)$ in \cite[Lemma 2]{mont}.
\item[(ii)] This was improved to $H^2 - H \log H + O(H)$ in unpublished work of Montgomery, with a full proof appearing in \cite{croft}. Note that this is already enough to establish \eqref{2h}.
\item[(iii)] The sharper asymptotic $H^2 - H\log H + (1 - \gamma - \log 2\pi) H + O_\eps(H^{1/2+\eps})$ for any $\eps>0$ was stated without proof in \cite{goldston}, 
but a proof can be found in \cite[(16), (17)]{ms} or \cite{ms-beyond}.
\item[(iv)] Assuming the Riemann hypothesis, an even sharper bound of $H^2 - H\log H + (1 - \gamma - \log 2\pi) H + O_\eps(H^{5/12+\eps})$ was established in \cite{vaughan}, who also obtained a more modest sharpening (of Vinogradov--Korobov type) to (iii) without assuming the Riemann hypothesis.
\item[(v)] In \cite{sg}, it is shown the error term in (iii) or (iv) is of magnitude $\gg H^{1/4}$ (with either choice of sign) for infinitely many $H$.
\end{itemize}
We also remark that the fact that \eqref{2h} is somewhat less than $H^2$ is closely related to the pair correlation conjecture, as well as the expected belief the error term in the prime number theorem in medium length intervals is somewhat smaller than what is predicted from the Cram\'er random model.  See \cite{ms}, \cite{ms-beyond} for further discussion.  These asymptotics are also related to unexpected biases in the distribution of consecutive primes \cite{lemke}.

From \eqref{2h} we have
$$
2 \sum_{0 < h_1 < h_2 \leq \lambda \log x} {\mathfrak S}(\{h_1,h_2\}) \leq (\lambda \log x)^2.$$
We conclude that
$$\E(\mathbf{S}^2_w - \mathbf{S}_w) \leq (\lambda \log x)^2  \prod_{p \leq w} (1-\frac{1}{p})^2 + O\left( \frac{(\lambda \log x)^2}{w} \prod_{p \leq w} (1-\frac{1}{p})^2 \right).$$ 
Combining this with \eqref{esw} and rearranging, we conclude that
$$ \Var( \mathbf{S}_w )	 \ll \E \mathbf{S}_w + \frac{(\lambda \log x)^2}{w} \prod_{p \leq w} (1-\frac{1}{p})^2$$
and the claim now follows from \eqref{esw}, Mertens' theorem, and the hypothesis $w \geq \lambda \log x$.
\end{proof}

To establish \eqref{est-1}, we will obtain a recursive inequality for the bias $\E (-1)^{\mathbf{S}_{p_n}}$.  If $\lambda \log x \leq p_n < p_{n+1} \leq z$ are consecutive primes, we observe that $\mathbfcal{S}_{p_{n+1}}$ is obtained from $\mathbfcal{S}_{p_n}$ by removing one residue class modulo $p_{n+1}$, chosen uniformly and independently of $\mathbfcal{S}_{p_n}$.  Since $p_{n+1} \geq \lambda \log x$ exceeds the diameter of $\mathbfcal{S}_{p_n}$, we conclude that after conditioning on $\mathbf{S}_{p_n}$, we have $\mathbf{S}_{p_{n+1}} = \mathbf{S}_{p_n}$ with conditional probability $1 - \mathbf{S}_{p_n}/p_{n+1}$, and $\mathbf{S}_{p_{n+1}} = \mathbf{S}_{p_n}-1$ with conditional probability $\mathbf{S}_{p_n}/p_{n+1}$.  From the law of total expectation, we conclude the identity
\begin{align*} \E (-1)^{\mathbf{S}_{p_{n+1}}} &= \E \left( 1 - \frac{\mathbf{S}_{p_n}}{p_{n+1}} \right) (-1)^{\mathbf{S}_{p_{n}}} +  \frac{\mathbf{S}_{p_n}}{p_{n+1}} (-1)^{\mathbf{S}_{p_{n}}-1}  \\
&= \E\left( 1 - \frac{2 \mathbf{S}_{p_n}}{p_{n+1}} \right) (-1)^{\mathbf{S}_{p_{n}}}.
\end{align*}
We now exploit the concentration of $\mathbf{S}_{p_n}$ around the mean $\E \mathbf{S}_{p_n}$ provided by Lemma \ref{var}. By the triangle inequality, we  have
$$ |\E (-1)^{\mathbf{S}_{p_{n+1}}}| \leq \left(1 - \frac{2 \E \mathbf{S}_{p_n}}{p_{n+1}}\right) |\E (-1)^{\mathbf{S}_{p_{n}}}| + O\left( \frac{\E |\mathbf{S}_{p_n} - \E \mathbf{S}_{p_n}|}{p_{n+1}} \right).$$
The crucial point here is that the factor $1 - \frac{2 \E \mathbf{S}_{p_n}}{p_{n+1}}$, while positive, is slightly less than one, indicating a tendency to reduce the bias as $n$ increases.

By Cauchy--Schwarz and \eqref{vsw}, we have
$$ \E |\mathbf{S}_{p_n} - \E \mathbf{S}_{p_n}| \ll \left( \frac{\lambda \log x}{\log p_{n}} \right)^{1/2}$$
while from \eqref{esw} we have
$$ \E \mathbf{S}_{p_n} = \frac{\lambda \log x}{e^\gamma \log p_n} \left(1 + O\left(\frac{1}{\log p_n}\right)\right).$$
Bounding $1 - \frac{2 \E \mathbf{S}_{p_n}}{p_{n+1}}$ by $\exp \left( - \frac{2 \E \mathbf{S}_{p_n}}{p_{n+1}} \right)$, we conclude the recursive inequality
$$ |\E (-1)^{\mathbf{S}_{p_{n+1}}}| \leq \exp\left( - \frac{2 \lambda \log x}{e^\gamma p_n \log p_n} + O\left( \frac{\lambda \log x}{p_n \log^2 p_n} \right) \right) |\E (-1)^{\mathbf{S}_{p_{n}}}| + O\left( \frac{1}{p_{n}}  \left( \frac{\lambda \log x}{\log p_{n}} \right)^{1/2} \right).$$
Iterating\footnote{Here we are basically using the discrete version of Gronwall's inequality.} this inequality starting from the trivial bound $|\E (-1)^{S_p}| \leq 1$ for $p$ the first prime greater than $\lambda \log x$, we conclude that
$$ \E (-1)^{\mathbf{S}_z} \ll \alpha_{\lambda\log x} + \sum_{\lambda \log x \leq p \leq z} \frac{\alpha_p}{p} \left( \frac{\lambda \log x}{\log p} \right)^{1/2},$$
where $\alpha_w$ is a weight of the form
$$ \alpha_w \coloneqq \exp\left( - \sum_{w \leq p < z} \left( \frac{2 \lambda \log x}{e^\gamma p \log p} + O\left( \frac{\lambda \log x}{p \log^2 p} \right) \right) \right).$$
From the prime number theorem and summation by parts, we have
$$ \sum_{w \leq p < z} \frac{1}{p \log p} = \frac{1}{\log w} - \frac{1}{\log z} + O\left( \frac{1}{\log^2 w}\right)$$
and
$$ \sum_{w \leq p < z} \frac{1}{p \log^2 p} \ll \frac{1}{\log^2 w}$$
for any $\lambda \log x \leq w \leq z$, thus
\begin{equation}\label{alphaw}
 \alpha_w = \exp\left( -\frac{2 \lambda \log x}{e^\gamma \log w} + \frac{2 \lambda \log x}{e^\gamma \log z} + O\left( \frac{\lambda \log x}{\log^2 w} \right) \right).
\end{equation}
In particular we have
$$ \alpha_{\lambda\log x} \leq \exp\left( - c \frac{\log x}{\log \log x} \right)$$
for some absolute constant $c>0$, which is certainly $O(1/\sqrt{\lambda})$.  It thus remains to establish the bound
$$ \sum_{\lambda \log x \leq p \leq z} \frac{\alpha_p}{p} \left( \frac{\lambda \log x}{\log p} \right)^{1/2} \ll \frac{1}{\sqrt{\lambda}}.$$
We first dispose of the contribution of the small primes $p \leq x^{\frac{1}{100 \log\log x}}$.  From \eqref{alphaw} we have
$$ \alpha_p \leq \exp( - 10 \log\log x ) = \log^{-10} x$$
(say), while we may crudely bound
$$ \left( \frac{\lambda \log x}{\log p} \right)^{1/2} \ll \log x$$
for some absolute constant $c>0$ in this case.  From Mertens' theorem, the contribution of this case is thus $O( \log^{-8} x )$, which is certainly $O(1/\sqrt{\lambda})$.  Thus it remains to show that
\begin{equation}\label{man}
 \sum_{x^{\frac{1}{100 \log\log x}} \leq p \leq z} \frac{\alpha_p}{p} \left( \frac{\lambda \log x}{\log p} \right)^{1/2} \ll \frac{1}{\sqrt{\lambda}}.
\end{equation}
For each prime $p$ in this range we have
\begin{equation}\label{loglog}
 m \leq \frac{\lambda \log x}{\log p} < m+1 
\end{equation}
for some natural number $m$ with
$$ \frac{\lambda \log x}{\log z} - 1 \leq m \ll (\log \log x)^{O(1)}.$$
For $p$ in such a range, the $\frac{\lambda \log x}{\log^2 p}$ term in \eqref{alphaw} is bounded, hence
$$\alpha_p \ll \exp\left( - \frac{2}{e^\gamma} \left( m - \frac{\lambda \log x}{\log z} \right) \right)$$
and so the contribution of a given $m$ to \eqref{man} is
$$ \ll \exp\left( - \frac{2}{e^\gamma} \left( m - \frac{\lambda \log x}{\log z} \right) \right) m^{1/2} \sum_{p: m \leq \frac{\lambda \log x}{\log p} < m+1} \frac{1}{p}.$$
The restriction \eqref{loglog} restricts $\log\log p$ to an interval of size $O(1/m)$, but also (due to the upper bound on $m$) also encompasses at least one dyadic range $[X,2X]$ of $p$.  By Mertens' theorem (or the Brun--Titchmarsh inequality), we thus have
$$ \sum_{p: m \leq \frac{\lambda \log x}{\log p} < m+1} \frac{1}{p} \ll \frac{1}{m},$$
and so the total contribution of \eqref{man} is
$$ \ll \sum_{m \geq \frac{\lambda \log x}{\log z} - 1} \frac{1}{m^{1/2}} \exp\left( - \frac{2}{e^\gamma} \left( m - \frac{\lambda \log x}{\log z} \right) \right).$$
This is an exponentially decaying sum in $m$, so can be controlled by its first term
$$ \ll \left(\frac{\lambda \log x}{\log z} \right)^{-1/2} \asymp \frac{1}{\lambda^{1/2}},$$
giving the desired claim.

\begin{remark} The above analysis in fact shows that the convergence rates for the partial sums $\sum_{n \leq x} \frac{(-1)^n n}{p_n}$, $\sum_{10 \leq n \leq x} \frac{(-1)^{\pi(n)}}{n \log n}$ are $O
\left( \frac{1}{(\log\log x)^{0.1}}\right)$, assuming Conjecture \ref{hlc}.  Probabilistic heuristics suggest that the true rate of convergence is $O(1/\log x)$ and $O(1/x^{1/2-o(1)})$ respectively for the above two partial sums, though conjecturally the convergence of the first partial sum should also become $O(1/x^{1/2-o(1)})$ when one averages together consecutive partial sums to reduce oscillation.
\end{remark}

\section{A generalization}\label{general-sec}

The questions and arguments in this section arose from several conversations with William Banks.

In this section we record a generalization of the above results, in which the base $(-1)$ is replaced by other non-trivial unit phases.  More precisely, we consider the following two questions:

\begin{question}\label{main-z}  Let $z \in \C$ with $|z|=1$ and $z \neq 1$. Does the series $\sum_{n=1}^\infty \frac{z^n n}{p_n}$  converge?
\end{question}

\begin{question}\label{main-2-z}  Let $z \in \C$ with $|z|=1$ and $z \neq 1$. Does the series $\sum_{n=2}^\infty \frac{z^{\pi(n)}}{n \log n}$ converge?
\end{question}

We first observe that these questions continue to be equivalent for any fixed $z$.  Indeed one has a generalization
$$ \sum_{n \leq x} \frac{z^n n}{p_n} = \frac{z}{z-1} \sum_{2 \leq m \leq x \log x} \frac{z^{\pi(m)}}{m \log m} + C_z + o(1)$$
of \eqref{nx} as $x \to \infty$, where $C_z$ is a constant depending on $z$ and the rates of decay in asymptotic notation such as $o(1)$ is permitted to depend non-uniformly on $z$.  We sketch the argument as follows.  Writing $z^n = \frac{z}{z-1} (z^n - z^{n-1})$ and summing by parts, we have
$$ \sum_{n \leq x} \frac{z^n n}{p_n}  = -\frac{z}{2(z-1)} + \frac{z}{z-1} \sum_{n \leq x} z^n \left( \frac{n}{p_n} - \frac{n+1}{p_{n+1}} \right) + o(1).$$
As before we split
$$ z^n \left( \frac{n+1}{p_{n+1}} - \frac{n}{p_n} \right) = z^n \frac{n(p_{n+1}-p_n)}{p_n p_{n+1}} - \frac{z^n}{p_n}.$$
A further summation by parts reveals that $\sum_{n=1}^\infty \frac{z^n}{p_n}$ converges, while from \eqref{nsum} we see that
$$ \sum_{n=1}^\infty \left( \sum_{p_n \leq m < p_{n+1}} \frac{z^{\pi(m)}}{m \log m} - \frac{z^n n (p_{n+1}-p_n)}{p_n p_{n+1}} \right)$$
is absolutely convergent.  The claim then follows along similar lines to the remaining arguments in Section \ref{impl}.

Theorem \ref{main-impl} can also be extended to general $z$ by adapting the arguments in Section \ref{impl-2}.  The Bonferroni-type inequality in Lemma \ref{bon} can be replaced with the weaker bound
$$ z^N = \sum_{k=0}^r (z-1)^k \binom{N}{k} + O_z \left( r 2^r \binom{N}{r} \right)$$
for any $r$, which can be established by a similar argument (but using the triangle inequality instead of alternating series bounds).  The rest of the argument proceeds with minor modifications; for instance, several factors of $2$ appearing in expressions such as $2 {\bf E} {\bf S}_{p_n}$ will need to be by replaced by $1-z$ (or $\mathrm{Re}(1-z)$ after an application of the triangle inequality).  We leave the details of these changes to the interested reader.

To summarize the above discussion, we have shown that assuming Conjecture \ref{hlc}, the power series $\sum_{n=1}^\infty \frac{z^n n}{p_n}$ converges everywhere on the boundary of the disk $\{ z: |z| \leq 1\}$ of convergence, except at $z=1$ where of course it diverges thanks to the prime number theorem.  Based on analogy with random power series, it seems natural to conjecture\footnote{We thank an anonymous commenter on the author's blog for this question.} that the unit circle is in fact the natural boundary of this series.  

\section{Further questions}

In \cite{en}, the divergence and convergence of various other series relating to the primes is considered.  In particular, the paper demonstrates the convergence of
\begin{equation}\label{pn1}
 \sum_{n=10}^\infty \frac{1}{n (\log\log n)^c (p_{n+1}-p_n)}
\end{equation}
whenever $c>2$, and conjectures divergence for $c=2$; the paper also mentions Question \ref{main}, as well as the question of determining the convergence of the series
\begin{equation}\label{pn2}
 \sum_{n=1}^\infty \frac{(-1)^n}{p_{n+1}-p_n}
\end{equation}
and
\begin{equation}\label{pn3}
 \sum_{n=1}^\infty \frac{(-1)^n}{n(p_{n+1}-p_n)}
\end{equation}

The divergence of \eqref{pn2} can now be established unconditionally, since Zhang's theorem \cite{zhang} on bounded gaps between primes implies that the summand fails to converge to zero.  The divergence of \eqref{pn1} when $c=2$ can be deduced from Conjecture \ref{hlc} by an argument that we sketch\footnote{We thank Kevin Ford for supplying a version of this argument.} as follows.  By dyadic decomposition and the prime number theorem, it suffices to establish the lower bound
$$ \sum_{X \leq p_n < 2X} \frac{1}{p_{n+1}-p_n} \gg \frac{X \log\log X}{\log^2 X}$$
for all large $X$, since $\frac{1}{n (\log\log n)^2} \asymp \frac{\log X}{X (\log \log X)^2}$ for all $n$ in this range, and $\frac{1}{\log X \log\log X}$ diverges when summed over powers of two.  By further dyadic decomposition in the gap $p_{n+1}-p_n$, it suffices to obtain the bound
\begin{equation}\label{lam-strong}
 \# \{ X \leq p_n < 2X: p_{n+1} - p_n \leq \lambda \log X \} \gg \frac{\lambda X}{\log X}
\end{equation}
for any $\frac{2}{\log X} \leq \lambda \leq 1$. It suffices to treat the case where $\lambda$ is small.  The left-hand side can be written as
$$ \sum_{X \leq p_n < 2X} 1_{\# (p_n, p_n + \lambda \log X] \cap {\mathcal P} \neq 0}.$$
Using the Bonferroni inequalities
$$ m - \binom{m}{2} \leq 1_{m \neq 0} \leq m$$
for any $m \geq 0$, and rearranging, we can write this expression as
$$ \sum_{2 \leq h \leq \lambda \log X} \sum_{X \leq n < 2X} 1_{n, n+h \in {\mathcal P}}
- O\left( \sum_{2 \leq h < h' \leq \lambda \log X} \sum_{X \leq n < 2X} 1_{n, n+h, n+h' \in {\mathcal P}} \right)
$$
so by Conjecture \ref{hlc} it suffices to establish the bounds
$$ \sum_{2 \leq h \leq \lambda \log X} {\mathfrak S}(\{0,h\}) \asymp \lambda \log X$$
and
$$ \sum_{2 \leq h < h' \leq \lambda \log X} {\mathfrak S}(\{0,h,h'\}) \asymp \lambda^2 \log^2 X$$
since the contribution of the error terms are easily treated.  But this follows from Gallagher's estimate \cite[(3)]{gallagher} (or from the more refined estimates in \cite{ms}, \cite{kuperberg}).  We remark that in the absence of the prime tuples conjecture, it should be possible to obtain the weaker version
$$ \# \{ X \leq p_n < 2X: p_{n+1} - p_n \leq \lambda \log X \} \gg \frac{\lambda^{49} X}{\log X}$$
of \eqref{lam-strong} for $\frac{246}{\log X} \leq \lambda \leq 1$ by combining the method of Maynard \cite{maynard-dense} with the calculations in \cite{selberg}.  This is however insufficient to settle the divergence of \eqref{pn2} in the $c=2$ case.  We also remark that a matching upper bound to \eqref{lam-strong} can be established from standard upper bound sieves.

The convergence of the sum \eqref{pn3} remains a challenging open problem, even with the assumption of Conjecture \ref{hlc}; a significant portion of this sum arises from the small prime gaps in which $p_{n+1}-p_n$ is much smaller than $\log n$, but these are a sparse set of $n$ and so the van der Corput type methods used in this paper do not appear to be effective to induce cancellation from the $(-1)^n$ terms on this sparse set. However, standard probabilistic heuristics (for instance, replacing $(-1)^n$ by independent random signs and invoking Khintchine's inequality) suggest rather strongly that this series should be convergent.  Indeed this heuristic suggests the stronger assertion that the series
$$
 \sum_{n=1}^\infty \frac{(-1)^n}{n^\theta(p_{n+1}-p_n)}$$
should converge for $\theta > 1/2$ and diverge for $\theta \leq 1/2$.


\begin{thebibliography}{10}

\bibitem{bft}
W. Banks, K. Ford, T. Tao, \emph{Large prime gaps and probabilistic models}, Invent. Math. \textbf{233} (2023), no. 3, 1471--1518.

\bibitem{croft}
M. J. Croft, \emph{Square-free numbers in arithmetic progressions}, Proc. London Math. Soc. (3) \textbf{30} (1975), 143--159.

\bibitem{erdos}
P. Erd\H{o}s, \emph{Some of My New and Almost New Problems and Results in Combinatorial Number Theory}, In Number Theory: Diophantine, Computational and Algebraic Aspects. Proceedings of the International Conference Held in Eger, July 29--August 2, 1996 (Ed. K. Gy\H{o}ry, A. Peth\H{o} and V. T. S\'os). Berlin: de Gruyter, pp. 169--180, 1998.

\bibitem{en}
P. Erd\H{o}s, M. Nathanson, \emph{On the sum of the reciprocals of the differences between consecutive primes}, In Number theory (New York, 1991–1995), 97--101. Springer-Verlag, New York, 1996.

\bibitem{finch}
S. R. Finch, Mathematical Constants, Cambridge University Press, 2003.

\bibitem{gallagher}
P. X. Gallagher, \emph{On the distribution of primes in short intervals}, Mathematika \textbf{23} (1976), no. 1, 4--9.

\bibitem{goldston}
D. A. Goldston, \emph{Linnik's theorem on Goldbach numbers in short intervals}, Glasgow Math. J. \textbf{32} (1990), no.3, 285--297.

\bibitem{sg}
D. A. Goldston, A. I. Suriajaya, \emph{A singular series average and the zeros of the Riemann zeta-function}, Acta Arith. \textbf{200} (2021), no.1, 71--90.

\bibitem{guy}
R. K. Guy, Unsolved problems in number theory, 2nd ed., Springer, 1994.

\bibitem{tuples}
G. H. Hardy, J. E. Littlewood, \emph{Some problems of 'Partitio Numerorum' III: On the expression of a number as a sum of primes}, Acta Math. \textbf{44} (1922), 1--70.

\bibitem{kuperberg}
V. Kuperberg, \emph{Sums of singular series with large sets and the tail of the distribution of primes}, preprint. {\url https://arxiv.org/abs/2210.09775}


\bibitem{lemke}
R. Lemke Oliver, K. Soundararajan, \emph{Unexpected biases in the distribution of consecutive primes}, Proc. Natl. Acad. Sci. USA \textbf{113} (2016), no. 31, E4446--E4454.

\bibitem{maynard-dense}
J. Maynard, \emph{Dense clusters of primes in subsets}, Compos. Math. \textbf{152} (2016), no. 7, 1517--1554.

\bibitem{mont}
H. L. Montgomery, \emph{Primes in arithmetic progressions}, Michigan Math. J. \textbf{17} (1970), 33--39.

\bibitem{ms-beyond}
H. L. Montgomery, K. Soundararajan, \emph{Beyond pair correlation}, Bolyai Soc. Math. Stud., 11
J\'anos Bolyai Mathematical Society, Budapest, 2002, 507--514.

\bibitem{ms}
H. L. Montgomery, K. Soundararajan, \emph{Primes in short intervals}, Comm. Math. Phys. \textbf{252} (2004), no.1--3, 589--617.

\bibitem{polya}
G. P\'olya, \emph{Heuristic reasoning in the theory of numbers}, Amer.Math.Monthly \textbf{66} (1959), 375--384.

\bibitem{selberg}
D. H. J. Polymath, \emph{Variants of the Selberg sieve, and bounded intervals containing many primes},
Res. Math. Sci. \textbf{1} (2014), Art. 12, 83 pp.

\bibitem{vaughan}
R. C. Vaughan, \emph{On a variance associated with the distribution of primes in arithmetic progressions}, Proc. London Math. Soc. (3) \textbf{82} (2001), 533--553. 

\bibitem{zhang}
Y. Zhang, \emph{Bounded gaps between primes}, Ann. of Math. (2) \textbf{179} (2014), no. 3, 1121--1174.

\end{thebibliography}
\end{document}